\documentclass[12pt]{article}
\usepackage{mathrsfs}
\usepackage{amsthm}
\usepackage{amssymb}
\usepackage{latexsym}
\usepackage{amsmath,amsfonts}
\usepackage{mathrsfs}
\usepackage{cases}
\usepackage{latexsym,bm}
\usepackage{indentfirst}
\usepackage{color}
\usepackage{ifpdf}
\usepackage{graphicx}
\usepackage{epstopdf}
\usepackage{epsfig}
\usepackage{psfrag}
\usepackage[
pdfauthor={lu},
pdftitle={Removable edges in cubic bricks},
pdfstartview=XYZ,
bookmarks=true,
colorlinks=true,
linkcolor=blue,
urlcolor=blue,
citecolor=blue,
bookmarks=true,
linktocpage=true,
hyperindex=true
]{hyperref}

\topmargin -8mm \textwidth 16cm \textheight 22cm \lineskip 0.2cm
\usepackage{graphicx}
\usepackage{color}

\title{ $b$-invariant  edges in  essentially 4-edge-connected near-bipartite cubic bricks \footnotetext{  The research
 is supported by National Natural Science Foundation of China
 (Grant No. 11671186).\newline E-mail addresses: flianglu@163.com
 (F. Lu), fengxing\_fm@163.com (X. Feng), wangyan@mnnu.edu.cn (Y. Wang).}}
  \author{ Fuliang Lu$^{a}$, Xing Feng$^b$,
Yan Wang$^{a}$\\ $^a$
\small {School of Mathematics and Statistics,
Minnan Normal University, Zhangzhou,  China}\\$^b$ \small {Faculty of Science, Jiangxi University of Science and Technology, Ganzhou, China}}
\date{}

\newtheorem{lem}{Lemma}
\newtheorem{thm}[lem]{Theorem}

\newtheorem{pro}[lem]{Proposition}
\newtheorem{conj}[lem]{Conjecture}

\begin{document}

\newcommand{\udots}{\mathinner{\mskip1mu\raise1pt\vbox{\kern7pt\hbox{.}}
\mskip2mu\raise4pt\hbox{.}\mskip2mu\raise7pt\hbox{.}\mskip1mu}}
\maketitle
\begin{abstract}

A {\em brick} is a  non-bipartite matching covered graph without non-trivial tight
cuts. Bricks  are
building blocks of matching covered graphs. We say that an
edge $e$ in a brick $G$ is {\em $b$-invariant} if $G-e$ is
matching covered and a tight cut decomposition of $G-e$ contains exactly one brick.
A 2-edge-connected cubic graph is  {\em essentially 4-edge-connected} if it
does not contain nontrivial 3-cuts. A brick $G$ is   {\em  near-bipartite} if it has a pair of edges
$\{e_1, e_2\}$ such that $G-\{e_1,e_2\}$ is bipartite and matching covered.

 Kothari,
de Carvalho, Lucchesi  and Little proved that each essentially
4-edge-connected cubic non-near-bipartite brick $G$, distinct
from the Petersen graph, has at least $|V(G)|$ $b$-invariant edges. Moreover,
they made a conjecture: every essentially 4-edge-connected cubic
near-bipartite brick $G$, distinct from $K_4$, has at least
$|V(G)|/2$ $b$-invariant edges. We confirm the conjecture in
this paper. Furthermore,
all the essentially 4-edge-connected cubic
near-bipartite bricks, the  numbers of   $b$-invariant edges  of which   attain  the lower bound, are presented.\\

\par {\small {\it Keywords: }\ \ near-bipartite graphs, bricks, essentially 4-edge-connected graphs, $b$-invariant edges}
\end{abstract}
\vskip 0.2in \baselineskip 0.1in
\section{Introduction}
All graphs considered in this paper are finite and may contain
multiple edges, but no loops.  We will generally follow the
notation and terminology used by Bondy and Murty in~\cite{BM08}. A
graph is called {\it matching covered} if it is connected, has
at least one edge and each of its edges is contained in some
perfect matching.
For the
terminology that is specific to matching covered graphs,
we follow Lov\'asz and Plummer~\cite{LP86}.

Let $G$ be a graph with vertex set $V(G)$ and edge set $E(G)$. For a vertex $v$, we use $N(v)$ to denote its neighborhood, that is $N(v):=\{u\in V(G): uv\in E(G)\}$.   For any
$X,Y\subseteq V(G)$,
denote by
$E_G(X,Y)$ the set of edges in $G$ with one end in $X$, the other in
$Y$. We say $\partial (X):=E_G(X,\overline{X})$ is an {\it edge cut} of $G$, where $\overline{X}:=V(G)-X$.
An edge cut
$C:=\partial (X)$ of $G$ is a {\it separating cut} if, for every edge $e$ in $G$, there exists a perfect matching $M_e$  that contains $e$ satisfying $|M_e\cap C|=1$, and is a {\it tight cut}
if $|C\cap M|=1$ for every perfect matching $M$ of $G$. Obviously,  every tight cut is separating.
An edge cut $\partial (X)$ is
{\it trivial} if either $|X|=1$ or $|\overline{X}| =1$. We call a
matching covered graph  free of  non-trivial tight cuts
  a {\it brick} if it is non-bipartite, and a {\it brace}
otherwise. Edmonds et al.~\cite{ELP8274} (also see Lov\'asz~\cite{L8772},
Szigeti\cite{S0259} and de Carvalho et al.~\cite{CLM18})
showed that a graph $G$ is a  brick if and only if $G$ is
$3$-connected and $G-\{x,y\}$ has a perfect matching for any two
distinct vertices $x, y\in V(G)$ (bicritical). Lov\'asz~\cite{L8772} proved
that any matching covered graph can be decomposed
into a unique list of bricks and braces by a
procedure called the tight cut decomposition.~In
particular, any two applications of the tight cut
decomposition of a matching covered graph $G$ yield
the same number of bricks, which is called the
{\it brick number} of $G$ and denoted by $b(G)$.

  A non-bipartite matching covered graph $G$ is {\em
near-bipartite} if it has a pair of  edges $e_1$
and $e_2$ such that $G-\{e_1,e_2\}$ is bipartite
and matching covered.
 An edge $e$ of a matching covered graph $G$ is
{\em removable} if $G-e$ is also matching covered.  Suppose $\{e_1,e_2\}\subseteq E(G)$.  We say that $\{e_1,e_2\}$ is a
{\em removable doubleton} of $G$ if neither $e_1$ nor $e_2$ are  removable, and  $G-\{e_1,e_2\}$ is matching covered. A removable
edge $e$ of  a matching covered graph $G$ is {\em $b$-invariant} if $b(G-e) = b(G)$, and   is {\em quasi-$b$-invariant}
if $b(G)=1$ and $b(G-e)=2$.
For a bipartite graph $G$, every removable edge is $b$-invariant,
since $b(G)=0$. De Carvalho, Lucchesi and Murty \cite{CLM0271} showed that  each removable edge is also $b$-invariant in a solid brick, where a solid brick is a brick free of non-trivial separating cuts.  Confirming  a conjecture of Lov\'asz,
 they  proved  in \cite{CLM0270} that
every brick, distinct from $K_4$, $\overline{C}_6$ and
the Petersen graph, has a $b$-invariant edge.
 De Carvalho, Lucchesi and Murty  \cite{CLM1216}  showed that every solid brick $G$ has at least $|V(G)|/2$ $b$-invariant edges.
  Based on properties of $b$-invariant edges, all bricks can be generated by
using  several operations from three basic bricks \cite{CLM0630}. $b$-invariant edges have many applications in
matching theory.
Readers may refer to \cite{CLM0293,KM1652,M0400} for applications of $b$-invariant edges.

Let $k$ be a positive integer. Recall that a graph $G$ is
{\it $k$-edge-connected} if $|C| \ge k$ for every edge cut
$C$ of $G$. An
edge cut with $k$ edges is called a {\it $k$-cut}. A cubic graph is {\it essentially 4-edge-connected} if it
is 2-edge-connected and   the only 3-cuts are the trivial
ones. Recently,  Kothari, de Carvalho, Lucchesi  and Little considered the
property of removable edges in essentially 4-edge-connected cubic bricks,  and
  showed that each
essentially 4-edge-connected cubic non-near-bipartite brick $G$,
distinct from the Petersen graph, has at least $|V(G)|$ $b$-invariant
edges \cite{KCLL19a}.  They also made the following
conjecture in the same paper.
\begin{conj}{\em \cite{KCLL19a}}\label{conj}
Every essentially 4-edge-connected cubic near-bipartite brick $G$,
distinct from $K_4$, has at least $|V(G)|/2$ $b$-invariant edges.
\end{conj}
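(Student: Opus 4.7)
I propose to fix a removable doubleton $\{e_1,e_2\}$ of $G$ and work inside the bipartite matching covered graph $H := G - \{e_1,e_2\}$, writing $(A,B)$ for its bipartition. Since $G$ is cubic on $n$ vertices, $|A|=|B|=n/2$, and the four endpoints of the doubleton have $H$-degree $2$ while all other vertices have $H$-degree $3$. Because $e_1,e_2$ are not removable they cannot be $b$-invariant, so every $b$-invariant edge of $G$ lies in $E(H)$ and has exactly one endpoint in $A$. The strategy is therefore to assign, to each vertex $v \in A$, an edge $e_v \in E(H)$ incident with $v$ that is $b$-invariant in $G$. Such an assignment is automatically injective into $E(G)$ because distinct vertices of $A$ determine distinct $H$-edges through them, and it delivers at least $|A| = n/2$ distinct $b$-invariant edges.

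The existence of $e_v$ rests on a tractable criterion: since $b(G) = 1$, an edge $e$ of $H$ with $G-e$ matching covered is $b$-invariant iff $G-e$ has no nontrivial tight cut. To apply this, I would first use the essential $4$-edge-connectedness of $G$ to classify the nontrivial tight cuts $\partial(X)$ that $G-e$ can acquire. Any such cut lifts to a cut $\partial_G(X)$ of size at least $4$ in $G$, and removing the single edge $e$ forces $e \in \partial_G(X)$ with $|\partial_{G-e}(X)| = 3$. Combined with the tightness condition and the fact that the doubleton $\{e_1,e_2\}$ must behave compatibly with every perfect matching, this leaves only a short, explicit list of obstructive shores $X$ for each candidate $e$. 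For each $v \in A$ I then select, among the two or three $H$-edges at $v$, one that avoids every obstruction, drawing on the removability theory of bipartite matching covered graphs (notably the de Carvalho--Lucchesi--Murty results) to guarantee that $H - e_v$ remains matching covered, so that $G - e_v = (H - e_v) + \{e_1,e_2\}$ is matching covered as well.

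The main obstacle is the tight-cut classification of $G - e$: the combined influence of the two degree-$2$ vertices produced by removing $e$ and of the doubleton $\{e_1,e_2\}$ yields several local configurations, each of which must be checked to decide whether it can block the choice of $e_v$ at some $v$. This case analysis is also the source of the extremal characterization: when the lower bound $n/2$ is attained, the choice of $e_v$ must be forced at every vertex of $A$, which rigidifies the local structure of $G$ enough to enumerate the extremal graphs explicitly. A secondary difficulty is handling the vertices in $\{x_1,y_1,x_2,y_2\}$, where only two $H$-edges are available; here I expect to either shift to a different removable doubleton when one is available or to treat a small number of exceptional subcases directly.
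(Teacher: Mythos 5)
Your plan founders on the $b$-invariance criterion you propose to use. You assert that, once $G-e$ is matching covered, $e$ is $b$-invariant if and only if $G-e$ has no nontrivial tight cut. That is false, and fatally so in the cubic setting: deleting any edge $e=xy$ of a cubic brick leaves $x$ and $y$ of degree $2$, and for a degree-$2$ vertex $x$ with neighbours $x_1,x_2$ the cut $\partial(\{x,x_1,x_2\})$ is always tight in $G-e$ and nontrivial as soon as $|V(G)|\geq 6$. So under your criterion no edge of $G$ would ever qualify, and the whole tight-cut classification you sketch is aimed at the wrong target. What $b$-invariance actually requires is $b(G-e)=b(G)=1$, i.e.\ that the full tight cut decomposition of $G-e$ produce exactly one brick (braces are permitted). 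The workable route, and the one the paper takes, is indirect: since $G$ is near-bipartite, one shows that $G-e$ is matching covered and still contains a removable doubleton, whence $G-e$ is near-bipartite and $b(G-e)=1$ follows from the optimal-ear-decomposition count (Proposition~\ref{nbn}); no analysis of tight cuts of $G-e$ is attempted or needed.

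Beyond this, the proposal never engages with the genuine difficulty, namely quasi-$b$-invariant edges (removable edges with $b(G-e)=2$), which is where the hypotheses of essential $4$-edge-connectedness and near-bipartiteness really bite: the paper disposes of them with Theorem~\ref{cube} (two adjacent quasi-$b$-invariant edges force the Cubeplex), giving $|E_3|\leq |V(G)|/2$, and then counts, using Theorem~\ref{ec}, the partition $E(G)=E_1\cup E_2\cup E_3$ together with structural control of the doubleton edges (Propositions~\ref{pro:4econ} and~\ref{lem:ct}). Your per-vertex assignment ($e_v$ for each $v\in A$) is also stronger than what is true by the paper's argument in the case of at most two removable doubletons, where only a global count $|E_1|\leq 4$, $|E_3|\leq |V(G)|/2$ is used rather than any claim that every vertex of $A$ meets a $b$-invariant edge; you would have to prove that per-vertex statement separately, and the steps you leave as ``a short, explicit list of obstructive shores'' and ``a small number of exceptional subcases'' are precisely the unproved core. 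As it stands the proposal is a programme, not a proof, and its one concrete lemma is incorrect.
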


Denote by $H_k$ the Cartesian product of a path of order ${k}$($k\geq2$)   and $K_2$ (the complete graph with two vertices). Suppose the four vertices with degree two of
$H_k$ are $\{u,v,x,y\}$ such that $u$ and $x$ lie in the same color class of $ H_k$.  By adding edges  $ux,vy$ to $ H_k$, we get a {\em prism} if $k$ is odd, and  a {\em M\"obius ladder}  if $k$ is even.  Prisms and M\"obius ladders are two types of cubic bricks  which play an important role in generating bricks \cite{CLM0630,No}. \footnote{By adding edges  $uy,vx$ to $ H_k$, we  also get a {\em prism} when $k$ is even, and  a {\em M\"obius ladder}  when $k$ is odd. In this case, the resulting graphs are bipartite, which are two types of cubic braces, see \cite{M0400} for example. }

Kothari, de Carvalho, Lucchesi and Little \cite{KCLL19a} also point out two infinite families that attain
the lower bound in Conjecture \ref{conj} exactly are: prisms of order $4k+2$, and M\"obius
ladders of order $4k$, where $k \geq  2$.
In this paper we present
a proof of Conjecture \ref{conj} and characterize all the graphs
that attain this lower bound.~The main result is stated as follows.

\begin{thm}\label{thm:conj}
Every essentially 4-edge-connected cubic near-bipartite
brick $G$, distinct from $K_4$, has at least $|V (G)|/2$
$b$-invariant edges. Furthermore, prisms of order $4k + 2$,
and M\"obius ladders of order $4k$, where $k \geq 2$, are
the only two families of graphs that attain this lower bound.
\end{thm}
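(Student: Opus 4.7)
The plan is to exploit the near-bipartite structure of $G$ via a removable doubleton $R=\{e_1,e_2\}$ so that $H:=G-R$ is bipartite and matching covered, and I set $n:=|V(G)|$. In $H$ the only vertices of degree $2$ are the four endpoints of $e_1,e_2$, and the bipartition $(A,B)$ of $H$ extends to a natural near-bipartition of $V(G)$. The strategy is to translate the $b$-invariance of an edge $e \in E(G)$ into a tight-cut condition in $G-e$, and then to bound how often that condition can fail.

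The first step is the criterion. For an essentially $4$-edge-connected cubic brick the companion paper of Kothari, de Carvalho, Lucchesi and Little already provides many removable edges, so the remaining question is when a removable edge $e \in E(G)\setminus R$ is quasi-$b$-invariant rather than $b$-invariant. I would show that this happens precisely when $G$ admits a non-trivial $4$-cut $\partial(X)$ with $e\in\partial(X)$ such that the bicontractions of $(G-e)[X]$ and $(G-e)[\overline X]$ at their degree-$2$ vertices are each bricks; this puts the quasi-$b$-invariant edges in bijection with a family $\mathcal{F}$ of ``brick-splitting $4$-cuts''. An uncrossing argument, exploiting that $|R|=2$ and that every member of $\mathcal{F}$ must interact with $H$ in a controlled manner (either straddling an edge of $R$ or separating the endpoints of $R$ in a prescribed way), organizes $\mathcal{F}$ into a laminar family whose size is bounded by $n-2$. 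Combined with $|E(G)|=3n/2$, this yields at least $n/2$ $b$-invariant edges.

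The main difficulty is the uncrossing and laminarization step, because two brick-splitting $4$-cuts may a priori cross, and controlling their intersection requires a case analysis based on the positions of the endpoints of $R$ in each shore and on the bipartite structure of $H$. For the extremal case I would assume equality holds, observe that then every non-$b$-invariant edge lies in exactly one member of $\mathcal{F}$ and every vertex is covered by these cuts, and conclude that the members of $\mathcal{F}$ tile $V(G)$ into consecutive ``rungs''. Cubicity and essential $4$-edge-connectivity then force the rungs to close into a cycle: when the closure preserves the bipartition, the spine has odd length and $G$ is a prism of order $4k+2$; when the closure twists the bipartition, the spine has even length and $G$ is a M\"obius ladder of order $4k$. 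The hypothesis $k \ge 2$ emerges because smaller instances collapse to $K_4$ or introduce non-trivial $3$-cuts, both of which are excluded.
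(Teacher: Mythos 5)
There is a genuine gap, and it is in the accounting at the heart of your plan. You fix a single removable doubleton $R=\{e_1,e_2\}$ and then treat every edge of $E(G)\setminus R$ as removable, so that the only non-$b$-invariant edges to control are the quasi-$b$-invariant ones, which you propose to bound by $n-2$ via a family of ``brick-splitting $4$-cuts''. But in an essentially $4$-edge-connected cubic brick an edge may instead belong to a removable doubleton other than $R$, in which case it is not removable at all, hence neither $b$-invariant nor quasi-$b$-invariant; this is exactly the trichotomy of Theorem~\ref{ec}. In the extremal examples themselves (the prism of order $4k+2$ and the M\"obius ladder of order $4k$) there are $n/2$ removable doubletons, so $n$ of the $3n/2$ edges are non-removable and there are no quasi-$b$-invariant edges whatsoever; your count ``$3n/2-2-|\mathcal{F}|$'' would then certify far more than $n/2$ $b$-invariant edges, which is false. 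So the dominant obstruction is not the quasi-$b$-invariant edges but the doubleton edges, and your framework never bounds them. For the same reason your extremal analysis is built on the wrong picture: the tight examples are not tiled by quasi-$b$-invariant $4$-cuts; their structure comes from the removable doubletons, whose union covers every vertex twice while the remaining perfect matching consists of the $b$-invariant edges.

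By contrast, the paper's proof works with the full classification $E_1$ (doubleton edges), $E_2$ ($b$-invariant), $E_3$ (quasi-$b$-invariant): Theorem~\ref{cube} gives $|E_3|\le |V(G)|/2$ (adjacent quasi-$b$-invariant edges force the Cubeplex), and the real work is showing that when a vertex is incident with two doubleton edges, its third edge is $b$-invariant (Claim~1, using the chain decomposition of Proposition~\ref{pro:4econ}, Proposition~\ref{lem:ct}, and $b=1$ for near-bipartite graphs, Proposition~\ref{nbn}); the equality case is then settled by showing every component of $G-E_1$ is $K_2$, which forces a prism or M\"obius ladder. Separately, even within your own scheme, the asserted equivalence between quasi-$b$-invariance and a non-trivial $4$-cut with both bicontracted shores bricks, the laminarity of $\mathcal{F}$, and the bound $|\mathcal{F}|\le n-2$ are all left as announcements rather than arguments, so the proposal would need substantial reworking even after the counting issue is repaired.
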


The proof of Theorem \ref{thm:conj} will be
given in Section \ref{sec:pf} after we present some
properties concerning removable edges and removable doubletons
of  matching covered graphs in Section \ref{sec:pre}.
We now state two theorems that will be useful in the proof of the main result.

\begin{thm}{\em\cite{KCLL19a}}\label{ec}
In an essentially 4-edge-connected cubic brick, each edge is either
removable or otherwise participates in a removable doubleton. Moreover,
each removable edge is either $b$-invariant or otherwise quasi-$b$-invariant.
 \end{thm}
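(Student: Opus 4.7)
The plan is to count edges by type using Theorem \ref{ec} and then analyze the extremal case structurally.

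\textbf{Step 1 (Counting setup).} By Theorem \ref{ec}, every edge of $G$ falls into one of three disjoint classes: $b$-invariant ($B$), quasi-$b$-invariant ($Q$), or belonging to a removable doubleton ($D$). Since $G$ is cubic, $|B|+|Q|+|D|=|E(G)|=3|V(G)|/2$, so the inequality $|B|\ge |V(G)|/2$ is equivalent to $|Q|+|D|\le |V(G)|$. Note that $|D|$ is even since doubletons pair up.

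\textbf{Step 2 (Bounding $Q$ and $D$).} The core work is establishing sharp local constraints on non-$b$-invariant edges. For a removable doubleton $\{f_1,f_2\}$ in an essentially 4-edge-connected cubic brick, the pair has a very restricted local configuration (its four endpoints span a small neighborhood), limiting how many doubletons can exist. For a quasi-$b$-invariant edge $e$, the definition gives a non-trivial tight cut in $G-e$ producing exactly two bricks; because $G$ itself is essentially 4-edge-connected and cubic, this tight cut must be a 4-cut of $G-e$ of a very constrained type localized around $e$. I would then set up a charging argument assigning each edge of $Q\cup D$ to a unique vertex (or to a pair of vertices with controlled multiplicity) of $G$, so that each vertex absorbs a total charge at most $1$. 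The near-bipartite hypothesis with its distinguished pair $\{e_1,e_2\}$ and the bipartition of $G-\{e_1,e_2\}$ would be used to rule out double-counted charges and to enforce parity of cuts. This yields $|Q|+|D|\le |V(G)|$.

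\textbf{Step 3 (Extremal characterization).} Suppose $|B|=|V(G)|/2$. Then the charging in Step 2 is tight, which forces every vertex to be incident with exactly one $b$-invariant edge; equivalently, $B$ is a perfect matching of $G$ and $H:=E(G)\setminus B$ is a $2$-regular spanning subgraph, hence a disjoint union of cycles. A standard argument using essential 4-edge-connectivity shows that $H$ has at most two components, since three or more cycles would yield a non-trivial $3$-edge-cut. Two cycles connected by the matching $B$ produce a prism, and a single cycle with $B$ as its chord set produces a M\"obius ladder. The orders $4k+2$ (prism) and $4k$ (M\"obius ladder) are forced by near-bipartiteness: a prism $C_m\times K_2$ is non-bipartite (hence a brick) exactly when $m$ is odd, giving order $4k+2$; and a M\"obius ladder $M_{2n}$ is non-bipartite exactly when $n$ is even, giving order $4k$. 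The exceptional graph $K_4$ is precisely the M\"obius ladder of order $4$ (the $k=1$ case), which the hypothesis excludes.

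\textbf{Main obstacle.} The hardest part is the charging argument in Step 2, which requires a precise description of the local geometry of removable doubletons and of the tight cuts created by quasi-$b$-invariant edges, together with ruling out overlaps between charges contributed by different non-$b$-invariant edges. The near-bipartite hypothesis is the essential ingredient making the inequality tight at the value $|V(G)|/2$ rather than at $|V(G)|$ (as in the non-near-bipartite case of Kothari--de Carvalho--Lucchesi--Little), and one must exploit it carefully. The extremal characterization in Step 3 is cleaner once the perfect-matching conclusion is established, but verifying the parity restrictions and excluding other cubic configurations still requires attention.
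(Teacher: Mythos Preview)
Your proposal does not address the stated theorem at all. Theorem~\ref{ec} is a classification result quoted from \cite{KCLL19a}; the paper does not prove it, it merely cites it as a tool. What you have written is instead a sketch toward Theorem~\ref{thm:conj} (the main result), and in fact your Step~1 explicitly \emph{invokes} Theorem~\ref{ec} as a hypothesis. So there is nothing here to compare against a proof of Theorem~\ref{ec}, because neither you nor the paper supplies one.

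Reading your proposal charitably as an outline for Theorem~\ref{thm:conj}, it still has real gaps relative to the paper's argument. First, your Step~2 is vague exactly where the paper does concrete work: the bound $|Q|\le |V(G)|/2$ comes directly from Theorem~\ref{cube} (no two adjacent quasi-$b$-invariant edges unless $G$ is the Cubeplex), which you never mention; and the control on $|D|$ is not obtained by a generic charging scheme but by the structural decomposition of Proposition~\ref{pro:4econ} together with Claim~1, which shows that a vertex incident with two doubleton edges has its third edge $b$-invariant. Second, your Step~3 is incorrect as stated. From ``$B$ is a perfect matching'' you conclude $H=E(G)\setminus B$ is $2$-regular and then assert that essential 4-edge-connectivity forces at most two cycles; but for a cycle $C$ in $H$ the cut $\partial(V(C))$ has size $|V(C)|$, not $3$, so three cycles do not automatically yield a nontrivial $3$-cut. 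Moreover, a single Hamiltonian cycle plus a perfect matching is not automatically a M\"obius ladder, nor are two cycles plus a matching automatically a prism. The paper instead uses Proposition~\ref{pro:4econ} to show that in the extremal case every component of $G-E_1$ is $K_2$, and the cyclic arrangement of the doubletons given by that proposition is what forces the prism/M\"obius-ladder structure.
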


\begin{thm}{\em\cite{KCLL19a}}\label{cube}
Let $G$  be an essentially 4-edge-connected cubic near-bipartite brick that has two adjacent quasi-$b$-invariant edges. Then $G$ is the Cubeplex (see Fig.\ref{bn1} (a)).
 \end{thm}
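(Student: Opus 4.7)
The plan is to extract detailed structural information from the two non-trivial tight cuts produced by the adjacent quasi-$b$-invariant edges, and then use essential $4$-edge-connectedness and the near-bipartition to force $G$ to be the eight-vertex Cubeplex.

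First I would set up notation: let $e_1 = vu_1$ and $e_2 = vu_2$ be the adjacent quasi-$b$-invariant edges at the common vertex $v$, and let $u_3$ be the third neighbor of $v$. By Theorem \ref{ec} both $G - e_1$ and $G - e_2$ are matching covered. Since $e_i$ is quasi-$b$-invariant, $b(G - e_i) = 2$, so the tight cut decomposition of $G - e_i$ uses at least one non-trivial tight cut $\partial(X_i)$. Because $G$ itself is a brick, $\partial(X_i)$ must separate the ends of $e_i$ (otherwise it would survive as a non-trivial tight cut of $G$); orient the labels so that $v \in X_i$ and $u_i \notin X_i$.

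Next I would pin down the sizes of the cuts. Essential $4$-edge-connectedness of $G$ gives $|\partial_G(X_i)| \ge 4$, hence $|\partial_{G-e_i}(X_i)| \ge 3$, and a parity count on the cubic graph $G - e_i$ (whose only degree-$2$ vertices are $v$ and $u_i$, on opposite sides of the cut) forces this cut to have odd size. Hence $|\partial_{G-e_i}(X_i)| = 3$, $|\partial_G(X_i)| = 4$, and $|X_i|$ is odd. The near-bipartite removable doubleton $\{f_1, f_2\}$ (whose removal leaves a bipartition $A \cup B$) then controls which vertices can lie in $X_i$: because every perfect matching meets $\partial(X_i)$ in exactly one edge, the two color classes are almost balanced across the cut, any discrepancy coming from whether $f_1, f_2$ straddle $\partial(X_i)$.

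Now I would uncross the two cuts. Submodularity of the cut function applied to $X_1$ and $X_2$, together with tracking which of $u_1, u_2, u_3$ lies on which side of each cut, should yield that all of $|X_1|, |\bar X_1|, |X_2|, |\bar X_2|$ are small (each in $\{3,5\}$). Combined with the balanced-color constraint from the near-bipartition, this pins $|V(G)|$ down to $8$. A finite check of essentially $4$-edge-connected cubic near-bipartite bricks on $8$ vertices, restricted to those with two adjacent quasi-$b$-invariant edges at a vertex $v$ whose local structure matches the cuts found above, then identifies $G$ as the Cubeplex.

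The main obstacle will be the case analysis in the uncrossing step: subcases arise according to whether $u_1 \in X_2$ or $u_2 \in X_1$, the location of $u_3$ relative to $X_1 \cap X_2$, and how $f_1, f_2$ interact with the two tight cuts. In every branch one must either reach a contradiction with essential $4$-edge-connectedness, bicriticality, or matching-coveredness of the contractions $(G-e_i)/X_i$ and $(G-e_i)/\bar X_i$, or recover exactly the adjacency pattern of the Cubeplex. Keeping the bookkeeping consistent through all branches, while using the near-bipartite hypothesis in a localized way near $v$ to seed the matching-covered extensions on each side, is the technically heaviest part.
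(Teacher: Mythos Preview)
There is nothing in the paper to compare against: Theorem~\ref{cube} is quoted from \cite{KCLL19a} and is not proved here. The paper uses it as a black box in Section~\ref{sec:pf}.

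That said, your sketch contains two concrete errors that would make the argument collapse even if fully fleshed out.

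\emph{First, the parity step is wrong.} With $|X_i|$ odd (as it must be for a tight cut) and $v\in X_i$, $u_i\notin X_i$, the degree sum over $X_i$ in $G-e_i$ is $3|X_i|-1$, which is even; hence $|\partial_{G-e_i}(X_i)|$ is even, not odd. So you cannot conclude $|\partial_{G-e_i}(X_i)|=3$ or $|\partial_G(X_i)|=4$; in fact $|\partial_G(X_i)|$ is odd and at least $5$. The subsequent ``$|X_i|\in\{3,5\}$'' claim, which drives the whole size bound, has no support.

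\emph{Second, the target graph has the wrong order.} You aim to pin $|V(G)|$ down to $8$ and then do a finite check on eight-vertex bricks. But the Cubeplex has $12$ vertices: the paper itself records that ``the Cubeplex contains $14>6=|V(G)|/2$ $b$-invariant edges'', i.e.\ $|V(G)|=12$. No essentially $4$-edge-connected cubic near-bipartite brick on $8$ vertices is the Cubeplex, so the finite check at the end would necessarily come up empty, contradicting the theorem rather than proving it.

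An actual proof (as in \cite{KCLL19a}) requires a much more delicate analysis of how the two tight cuts interact with the removable doubleton and with each other; the uncrossing alone does not force a bounded order, and the near-bipartite structure has to be exploited globally, not just ``in a localized way near $v$''.
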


\section{Equivalence classes in a brick}
\label{sec:pre}

Let $G$ be a matching covered graph. Two edges $e_1,e_2$ of
$G$ are {\it mutual dependence} if either $\{e_1,e_2\}\subseteq M$ or
$\{e_1,e_2\}\cap M=\emptyset$ for every perfect matching
$M$ of $G$. Obviously, mutual dependence is an equivalence relation. It partitions the edge
set $E(G)$ into equivalence classes\footnote{This terminology follows de Carvallo et al. \cite{CLM9914}. An equivalence
class contained at least two edges is called an equivalent set in \cite{HEYZ1956}.}. Lov\'asz proved the following attractive property of
 an equivalence class  in a brick.

\begin{thm}{\em~\cite{L8772}}\label{thm:lo}
Let $G$ be a brick and let $e$ and $f$ be two distinct mutual dependence edges of $G$. Then $G-\{e,f\}$ is bipartite.
\end{thm}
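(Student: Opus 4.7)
The plan is to argue by contradiction: assume $G-\{e,f\}$ contains an odd cycle $C$, and construct a perfect matching of $G$ containing exactly one of $e,f$, contradicting the mutual-dependence hypothesis.

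First, $e=u_1u_2$ and $f=v_1v_2$ must be vertex-disjoint, because $G$ is matching covered and so admits a perfect matching containing $e$; by mutual dependence that matching also contains $f$, forcing $e$ and $f$ to share no endpoint.

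Second, invoke bicriticality of $G$ to produce a perfect matching $N$ of $G-u_1-v_1$; observe that $N$ uses neither $e$ nor $f$. Since $|N|=|V(G)|/2-1$ while $G$ itself has a perfect matching, Berge's lemma yields an $N$-augmenting $u_1$-to-$v_1$ path $P$ in $G$, and $M':=N\triangle E(P)$ is a perfect matching of $G$. Because $e\in M'$ iff the first edge of $P$ at $u_1$ is $e$, and $f\in M'$ iff the last edge of $P$ at $v_1$ is $f$, mutual dependence forces every such $P$ to start with $e$ if and only if it ends with $f$. Hence it suffices to produce an $N$-augmenting $u_1$-to-$v_1$ path that starts with $e$ but ends with an edge other than $f$ (or symmetrically); such a path yields a perfect matching of $G$ containing exactly one of $e,f$.

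The heart of the argument is to use the odd cycle $C\subseteq G-\{e,f\}$ to build such a ``mismatched'' augmenting path via a blossom construction in the spirit of Edmonds. Consider the $N$-alternating forest rooted at $u_1$, initialized with the non-$N$ edge $e$. Since $|C|$ is odd, $C$ cannot be an $N$-alternating cycle; when the alternating exploration encounters $C$, a parity clash occurs, producing a blossom. Shrinking this blossom uncovers new alternating paths to $v_1$ that were previously unreachable, and the parity shift contributed by traversing $C$ enables one such path to reach $v_1$ via a neighbour other than $v_2$. Unshrinking gives the required $N$-augmenting path in $G$, and hence the contradictory perfect matching.

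The main obstacle is making this blossom construction rigorous. One must verify that the alternating exploration actually reaches the odd cycle $C$ (using the $3$-connectedness of the brick $G$ and, if necessary, repeated bicritical exchanges of the base matching $N$); that the resulting blossom contains a vertex adjacent to $v_1$ distinct from $v_2$; and that the blossom-shrinking and -unshrinking surgery preserves all matching conditions. Converting the pure parity information supplied by the odd cycle into a genuine $N$-alternating augmenting path is the principal technical content of the theorem.
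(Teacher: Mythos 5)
The paper does not prove this statement; it is quoted from Lov\'asz's matching-lattice paper \cite{L8772}, so the only question is whether your argument stands on its own. Your reduction is fine as far as it goes: $e$ and $f$ are disjoint, a perfect matching $N$ of $G-u_1-v_1$ exists by bicriticality, every perfect matching $M'$ of $G$ satisfies $e\in M'$ iff the $u_1$--$v_1$ path in $N\triangle M'$ starts with $e$ (and similarly for $f$), so mutual dependence is equivalent to the assertion that every $N$-augmenting $u_1$--$v_1$ path starts with $e$ iff it ends with $f$, and it would indeed suffice to exhibit one ``mismatched'' augmenting path. But that reformulation is essentially a restatement of the goal (a mismatched path is nothing other than a perfect matching containing exactly one of $e,f$), and the step that is supposed to produce it --- the blossom narrative --- is asserted, not proved. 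You yourself concede that converting the odd cycle $C$ into such a path is ``the principal technical content of the theorem''; that concession is accurate, and it means the proof is missing exactly where the theorem lives.

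Concretely, the blossom sketch has no mechanism behind it: nothing guarantees that an $N$-alternating search rooted at $u_1$ and forced to begin with $e$ ever reaches $C$ (blossoms appear in any non-bipartite graph and need have nothing to do with the particular odd cycle avoiding $e$ and $f$); nothing guarantees that a blossom, once shrunk, yields an augmenting path at all rather than a frustrated search; and nothing in shrinking/unshrinking steers the recovered path so that it enters $v_1$ through an edge other than $f$ --- the parity of $C$ is purely local information, while the conclusion requires a global interaction between $C$, $e$ and $f$. The brick hypotheses, which are essential (the statement fails for general non-bipartite matching covered graphs), enter your argument only through the vague phrase ``using the $3$-connectedness \dots and, if necessary, repeated bicritical exchanges,'' with no indication of how. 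The known proofs of this theorem do not proceed by an augmenting-path/blossom search: they use the deeper structure theory of bricks (tight cuts, ear decompositions, the matching lattice), which is a strong sign that the missing step is not a routine technicality but the actual substance of the result. As it stands, the proposal is a correct reduction followed by a gap.
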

An equivalence class in a brick contains at most two edges; and
a
removable doubleton in a brick is an equivalence class by Theorem \ref{thm:lo}.
Obviously, the intersection of any two different equivalence classes of a brick
is the empty set. Two distinct equivalence classes of   a matching covered graph
are {\em mutually exclusive} if no perfect matching contains edges in both classes. The following theorem gave a characterization  of  bricks with more than two mutually exclusive removable doubletons.

\begin{thm}{\em\cite{CLM9914}}\label{thm:3-re}
If a brick $G$ has three mutually exclusive removable doubletons,
then either $G$ is $K_4$ or $G$  is $\overline{C}_6$,
up to multiple edges joining vertices of both triangles of $\overline{C}_6$.

\end{thm}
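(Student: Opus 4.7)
The plan is to extract strong structural information from having three mutually exclusive removable doubletons $R_1,R_2,R_3$ and then to classify by a short case analysis.

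First I would apply Theorem~\ref{thm:lo} to each $R_i$: the graph $G-R_i$ is bipartite and matching covered, with balanced bipartition $(A_i,B_i)$. A short parity check on perfect matchings of $G$ that contain $R_i$ forces one edge of $R_i$ to lie entirely in $A_i$ and the other entirely in $B_i$; otherwise one of the two edges would turn out to be removable, contradicting the doubleton condition. Using mutual exclusivity together with the equivalence-class property of doubletons in a brick, any perfect matching $M$ containing an edge of $R_i$ in fact contains all of $R_i$ and avoids every $R_j$ with $j\neq i$; such an $M$ is a perfect matching of the bipartite graph $G-R_j$, so both edges of $R_i$ are bichromatic with respect to $(A_j,B_j)$.

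Next I would leverage this bichromatic/monochromatic dichotomy to produce three explicit $4$-edge cuts. Let $\chi_i$ be the characteristic function of $A_i$ and, for each pair $i\neq j$, set $X_{ij}:=\{v\in V(G):\chi_i(v)=\chi_j(v)\}$. The preceding step forces $\partial(X_{ij})=R_i\cup R_j$. Since no vertex is incident to both edges of a single $R_i$ (the two edges lie in opposite sides of $(A_i,B_i)$), at most two edges of $R_i\cup R_j$ meet any vertex, so $|X_{ij}|,|V(G)\setminus X_{ij}|\geq 2$. Consequently $G-R_i-R_j$ splits into two bipartite induced subgraphs $G[X_{ij}]$ and $G[Y_{ij}]$, each inheriting a perfect matching from any perfect matching of $G$ that contains $R_k$ with $k\notin\{i,j\}$.

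Finally I would case-analyse based on how the two edges of $R_3$ distribute across $\partial(X_{12})$. In the \emph{straddling} case (one edge of $R_3$ in each of $G[X_{12}]$ and $G[Y_{12}]$), a careful analysis using the three simultaneous $4$-cuts, the minimum-degree-$3$ condition of the brick, and the bipartite structures of each $G-R_i$ forces every edge of $G$ to lie in $R_1\cup R_2\cup R_3$, yielding $|E(G)|=6$, $|V(G)|=4$, and $G=K_4$. In the \emph{internal} case (both edges of $R_3$ in one component, say $G[Y_{12}]$), the two edges of $R_3$ form a perfect matching of $G[Y_{12}]$, so $|Y_{12}|=4$; applying the same dichotomy to the pairs $(R_1,R_3)$ and $(R_2,R_3)$ and combining the three simultaneous $4$-cuts with the bipartite matching-covered structure of each $G-R_i$ and with the $3$-connectivity of $G$, one shows $|V(G)|=6$, that the vertex set partitions into two triangles (each carrying one edge of every $R_i$), and that the only remaining freedom is the multiplicity of the cross edges between these two triangles, giving $G=\overline{C}_6$ up to multi-edges joining the two triangles.

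The main obstacle lies in the internal case. While $|Y_{12}|=4$ is immediate, ruling out $|X_{12}|>2$ or $|V(G)|>6$ in general requires propagating constraints simultaneously across all three $4$-cuts: any additional vertex on either side of $\partial(X_{12})$ would either force extra multi-edges that create a new equivalence class (contradicting the mutual exclusivity of $R_1,R_2,R_3$) or violate the bipartiteness of some $G-R_j$. Tracking these constraints through the cell decomposition of $V(G)$ induced by $(\chi_1,\chi_2,\chi_3)$ is the most delicate step.
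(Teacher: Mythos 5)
First, note that the paper does not prove this statement at all: Theorem~\ref{thm:3-re} is quoted from \cite{CLM9914}, so there is no in-paper proof to compare against; your attempt has to stand on its own. Its preparatory part does stand: using Theorem~\ref{thm:lo}, the parity argument placing one edge of each $R_i$ in $A_i$ and the other in $B_i$, the observation that a perfect matching containing $R_i$ avoids $R_j$ and hence makes both edges of $R_i$ bichromatic for $(A_j,B_j)$, and the resulting identities $\partial(X_{ij})=R_i\cup R_j$ with $|X_{ij}|,|Y_{ij}|\ge 2$ are all correct (though your stated reason for the monochromatic placement --- ``otherwise one edge would be removable'' --- is off; the real reason is parity plus non-bipartiteness of $G$).

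The genuine gap is that the classification itself, which is the entire content of the theorem, is never carried out. In the straddling case you only assert that ``a careful analysis \dots forces every edge of $G$ to lie in $R_1\cup R_2\cup R_3$,'' and in the internal case the very first claim --- that the two edges of $R_3$ form a perfect matching of $G[Y_{12}]$, so $|Y_{12}|=4$ ``is immediate'' --- is not justified and is not immediate from what you have established. From your setup, $Y_{12}$ splits into the two parts $V_{010}\cup V_{101}$ and $V_{011}\cup V_{100}$ of the $(\chi_1,\chi_2,\chi_3)$-cell decomposition, joined to each other only by the two $R_3$ edges and to $X_{12}$ only by $R_1\cup R_2$; nothing in the three $4$-cuts, the minimum degree, or the bipartiteness of the graphs $G-R_i$ bounds the sizes of these parts (parity and $3$-edge-connectivity are satisfied for arbitrarily large parts). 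Excluding larger parts, like excluding extra non-doubleton edges in the straddling case, requires invoking the brick property in an essential way (bicriticality/absence of nontrivial tight cuts, or $3$-connectivity at the vertex level), and you yourself flag this as ``the most delicate step'' without supplying it. As written, the proposal is a plausible proof plan whose hard core is missing, so it does not constitute a proof of the theorem.
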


An ear decomposition of a matching covered graph $G$ is {\em optimal} if,
among all ear decompositions of $G$, it uses the least possible number of
double ears (for  the definition of  the ear decomposition, see page 174 in  \cite{LP86}).

\begin{thm}{\em\cite{CLM0293}}\label{thm:ear-n}
The number of double ears in an optimal ear decomposition
of a matching covered graph $G$ is $b(G)+p(G)$, where
$p(G)$ is the number of bricks of G whose  are isomorphic to the Petersen graph (up to multiple edges).
\end{thm}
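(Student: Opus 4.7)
The plan is to prove Theorem~\ref{thm:ear-n} by reducing it to the brick-and-brace case via the tight cut decomposition, and then analyzing the contribution of each atomic piece separately.

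\textbf{Reduction via tight cuts.} The first step is to establish that ear decompositions are compatible with the tight cut decomposition. Specifically, if $C=\partial(X)$ is a non-trivial tight cut of $G$ and $G_1,G_2$ denote its two $C$-contractions, I would prove that the minimum number of double ears in an ear decomposition of $G$ equals the sum of the corresponding minima for $G_1$ and $G_2$. The ``$\le$'' direction is by concatenating ear decompositions on the two sides, using the fact that every perfect matching meets $C$ in exactly one edge, so ears on one side extend cleanly through the contraction into the other. The ``$\ge$'' direction traces each ear of $G$ through the contraction and verifies that a double ear cannot cross the cut and be split into two single ears on the two sides. Since both $b(\cdot)$ and $p(\cdot)$ are additive under tight cut decomposition, it suffices to prove the theorem for a single brick or brace.

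\textbf{Contribution of each brace or non-Petersen brick.} For a brace (bipartite), the classical bipartite ear decomposition uses only single ears, contributing $0$. For a non-bipartite brick $B$, any single ear added to a bipartite intermediate graph keeps it bipartite, so at least one double ear is required. For $B$ distinct from the Petersen graph (with $K_4$ and $\overline{C}_6$ handled directly), I would induct on $|V(B)|+|E(B)|$: by the De Carvalho--Lucchesi--Murty theorem, $B$ contains a $b$-invariant edge $e$, so that $B-e$ is matching covered with $b(B-e)=b(B)=1$ and no new Petersen factor is created in its tight cut decomposition. By induction, $B-e$ admits an ear decomposition with $1+p(B-e)$ double ears; appending the single ear $e$ gives one for $B$ with the same count, namely $1$.

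\textbf{The Petersen case and conclusion.} The main obstacle is showing that the Petersen graph $P$ requires exactly two double ears. The upper bound is an explicit construction starting from an even cycle. For the lower bound, I would argue by contradiction: if only one double ear $R=\{e_1,e_2\}$ were used, then $P - R$ would be a bipartite matching covered graph obtainable from an even cycle by single ears, and $\{e_1,e_2\}$ would play the role of a removable doubleton in the final step. Using vertex- and edge-transitivity of $P$ together with Theorem~\ref{thm:lo} and the known classification of mutually dependent pairs in $P$, one checks that every candidate pair $\{e_1,e_2\}$ fails: either $P-\{e_1,e_2\}$ is not bipartite, or it is bipartite but then $P$ itself would have a $b$-invariant edge, contradicting the known structure of $P$. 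Combining the three steps, each brick of $G$ contributes exactly one double ear and each Petersen-brick contributes one additional, yielding the total $b(G)+p(G)$. The step most likely to require delicate bookkeeping is the tight-cut compatibility in the first paragraph; the sharpest and most graph-specific step is the Petersen lower bound in the third.
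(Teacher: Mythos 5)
First, note that the paper does not prove Theorem~\ref{thm:ear-n} at all: it is imported from \cite{CLM0293}, where its proof occupies an entire article and rests both on the resolution of Lov\'asz's conjecture on $b$-invariant edges and on the structure theory of the matching lattice. So there is no in-paper argument to compare yours against, and your sketch has to be judged on its own; as it stands it has genuine gaps at the three places where the real difficulty lives. The most serious is your first step, the claimed additivity of the minimum number of double ears over a non-trivial tight cut $C=\partial(X)$. The ``$\le$'' direction (splicing ear decompositions of the two $C$-contractions) is a real lemma that can be cited or proved, but the ``$\ge$'' direction cannot be obtained by ``tracing each ear through the contraction'': an ear of $G$ may cross $C$ several times, its image in a $C$-contraction need not be an ear, and the images of the intermediate matching covered subgraphs need not form ear decompositions of $G_1$ and $G_2$ in any consistent order. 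The published lower bound $d(G)\ge b(G)+p(G)$ is not proved this way; it uses the fact that the perfect matchings associated with an ear decomposition generate the matching lattice, together with Lov\'asz's description of that lattice, in which Petersen bricks are precisely the exceptional objects. Without that (or some substitute), your reduction collapses.

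There are two further gaps. In the inductive step for a non-Petersen brick $B$ you remove a $b$-invariant edge $e$ and assert that ``no new Petersen factor is created,'' concluding $d(B)\le 1$. But $b$-invariance only guarantees $b(B-e)=1$; the unique brick of $B-e$ could be the Petersen graph, in which case your induction hands back $1+p(B-e)=2$ double ears, not $1$. This is exactly the delicate point for which de Carvalho, Lucchesi and Murty needed a strengthened existence theorem (a $b$-invariant edge whose resulting brick is not Petersen), and it cannot simply be asserted. Finally, in the Petersen lower bound you assume the unique double ear is the \emph{last} ear and consists of two single edges. Neither is automatic: the double ear must be the step at which bipartiteness is lost, but single ears may follow it, and only the final ear is forced to have no internal vertices by cubicity. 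Also, the intended contradiction is misstated: if $P-\{e_1,e_2\}$ were bipartite and matching covered, the conclusion would be that $P$ is near-bipartite (false, since one must delete at least three edges of the Petersen graph to destroy all odd cycles), not that $P$ has a $b$-invariant edge. The case in which the last ear is a single edge $e$ then needs $d(P-e)\ge 2$, i.e.\ the general bound $d\ge b$ applied to $P-e$ with $b(P-e)=2$ --- which circles back to the unproved lower bound of your first step. So the architecture is plausible as a roadmap, but each of these three points requires substantial additional argument (or explicit citation of the corresponding results of \cite{CLM0270,CLM0293}) before it is a proof.
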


It can be checked that the number of double ears in an optimal ear decomposition
of a near-bipartite graph is 1.
The following proposition is implied by Theorem \ref{thm:ear-n}.

\begin{pro}\label{nbn}
If $G$ is a near-bipartite graph, then $b(G)=1$.
\end{pro}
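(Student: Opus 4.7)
The plan is to apply Theorem~\ref{thm:ear-n} once we verify the parenthetical remark preceding the proposition, namely that an optimal ear decomposition of a near-bipartite graph uses exactly one double ear. Given this, the proposition follows immediately: since $G$ is non-bipartite and matching covered, its tight cut decomposition must contain at least one brick, so $b(G)\geq 1$; combined with $b(G)+p(G)=1$ (from Theorem~\ref{thm:ear-n}) and $p(G)\geq 0$, we conclude $b(G)=1$ (and as a byproduct $p(G)=0$, so a near-bipartite brick is never Petersen).

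The substance of the argument is the double-ear count. For the lower bound, I would recall that single-ear additions, when started from $K_2$, preserve bipartiteness; since $G$ is non-bipartite, any ear decomposition must use at least one double ear. For the upper bound, I would exploit the pair $\{e_1,e_2\}$ certifying near-bipartiteness: by definition $G-\{e_1,e_2\}$ is bipartite and matching covered, and a classical theorem on bipartite matching covered graphs guarantees an ear decomposition of $G-\{e_1,e_2\}$ starting from $K_2$ that uses only single ears. Appending the pair $\{e_1,e_2\}$ (two odd paths of length one) as a single double ear extends this to an ear decomposition of $G$ with exactly one double ear. Hence the optimal count is $1$.

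The only delicate point is checking that $\{e_1,e_2\}$ can legitimately serve as the final double ear. I would verify that $e_1$ and $e_2$ are non-adjacent (otherwise $G-\{e_1,e_2\}$ would have a vertex of smaller degree, and one can argue this is incompatible with $G-\{e_1,e_2\}$ being matching covered on the same vertex set as $G$), so the two length-one paths are vertex-disjoint and can be added simultaneously as a genuine double ear.

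The main obstacle is purely bookkeeping about ear decompositions; once the double-ear count is pinned at $1$, Theorem~\ref{thm:ear-n} finishes the proof in a single line, and no further structural analysis of $G$ is required.
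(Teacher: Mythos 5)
Your proposal is correct and follows essentially the paper's own route: the paper likewise pins the number of double ears in an optimal ear decomposition of a near-bipartite graph at one (stated there as ``it can be checked'') and then invokes Theorem~\ref{thm:ear-n}, exactly as you do, so your write-up simply supplies the omitted verification. One minor repair: your degree-based justification that $e_1$ and $e_2$ are non-adjacent is not convincing as stated; the clean argument is that, since $G$ is non-bipartite, one of the two edges, say $e_1$, has both ends in one colour class of $G-\{e_1,e_2\}$, and then a parity count on the colour classes forces any perfect matching of $G$ containing $e_1$ to contain $e_2$ with both ends in the other class, which yields disjointness (and such a matching exists because $G$ is matching covered).
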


We say a bipartite graph $G(A,B)$ is  {\em balanced} if $|A|=|B|$. A
  bipartite graph with a perfect matching is always balanced. For the equivalence classes in a bipartite graph, we have the following result, which is an immediate consequence of Lemma 4.1 in \cite{CLM9914}.

\begin{lem}\label{thm:bi-eq}
Suppose  $e_1$ and $e_2$ are equivalent in a
 matching covered bipartite graph $G$. Then  $\{e_1,e_2\}$ forms a 2-edge-cut
which separates $G$ into two balanced components.
%
\end{lem}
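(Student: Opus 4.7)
The plan is to first show that $\{e_1, e_2\}$ is a $2$-edge-cut of $G$ and then verify that both sides are balanced. Take a perfect matching $M$ of $G$ containing both $e_1$ and $e_2$, which exists because $G$ is matching covered and $e_1, e_2$ are mutually dependent. Write $e_i = a_i b_i$ with $a_i \in A$ and $b_i \in B$; these four vertices are distinct because no two edges of $M$ share an endpoint.

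A preliminary observation I would record is that no edge of $G$ can lie in every perfect matching. Indeed, if $e = ab$ did, then the cut $\partial(\{a,b\})$ would meet no perfect matching, forcing $\partial(\{a,b\}) = \emptyset$ and $V(G) = \{a,b\}$, contradicting the fact that $G$ contains the two distinct edges $e_1, e_2$. So there is a perfect matching $N$ of $G$ with $e_1 \notin N$, and by mutual dependence $e_2 \notin N$ as well. The symmetric difference $M \oplus N$ decomposes into $M$-alternating cycles, and by mutual dependence each such cycle contains both of $e_1, e_2$ or neither; hence there is a unique alternating cycle $C^{*}$ containing both. A short bipartite parity check shows that $C^{*} - \{e_1, e_2\}$ is the disjoint union of two $M$-alternating paths, $P$ from $b_1$ to $a_2$ and $Q$ from $b_2$ to $a_1$.

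For the $2$-edge-cut claim I would argue by contradiction. Suppose $G - \{e_1, e_2\}$ is connected, and focus on $H := G - \{a_2, b_2\}$. Since $e_2 \in M$, the set $M - e_2$ is a perfect matching of $H$ containing $e_1$, so it suffices to produce another perfect matching $M^{*}$ of $H$ not using $e_1$: then $M^{*} \cup \{e_2\}$ would be a perfect matching of $G$ containing $e_2$ but not $e_1$, contradicting mutual dependence. Equivalently, I need an $(M-e_2)$-alternating cycle in $H$ through $e_1$, i.e., an $M$-alternating cycle of $G$ through $e_1$ that avoids $\{a_2, b_2\}$. The paths $P, Q$ from $C^{*}$ already lie in $G - \{e_1, e_2\}$, while the assumed connectivity of $G - \{e_1, e_2\}$ provides a further path joining $V(P) \cup \{b_1, a_2\}$ to $V(Q) \cup \{b_2, a_1\}$; splicing this connecting path with appropriate portions of $P$, $Q$ and the edge $e_1$ should yield a closed walk through $e_1$ avoiding $\{a_2, b_2\}$, which can then be cleaned up into the desired alternating cycle.

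The splicing step is the main obstacle: a generic connecting path need not be $M$-alternating, and ensuring that the resulting closed walk can be reduced to a simple alternating cycle while avoiding $\{a_2, b_2\}$ requires care. I expect to handle it by choosing the connecting path carefully (for example, a shortest such path or one obtained by an inductive shortening argument) and by another symmetric-difference operation with the near-matching $M - \{e_1, e_2\}$ to straighten out the walk. Once the cut $V(G) = U_1 \cup U_2$ with $\partial(U_1) = \{e_1, e_2\}$ is in hand, the balance assertion follows immediately: the matching $N$ contains no cut edge, so $N$ restricts to perfect matchings of $G[U_1]$ and $G[U_2]$, forcing $|A \cap U_i| = |B \cap U_i|$ for both $i$.
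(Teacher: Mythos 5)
There is a genuine gap at the heart of your argument, and you flag it yourself: the ``splicing'' step is not a detail to be cleaned up later, it \emph{is} the lemma. Your reduction is fine up to that point (the four endpoints are distinct, a perfect matching $M$ through both edges and a perfect matching $N$ through neither exist, and $M\oplus N$ yields the cycle $C^{*}$ with the cross-pairing of endpoints), and you correctly observe that it suffices to exhibit an $M$-alternating cycle through $e_1$ avoiding $a_2$ and $b_2$. But assuming only that $G-\{e_1,e_2\}$ is connected and then splicing an arbitrary connecting path into $P\cup Q\cup\{e_1\}$ does not produce such a cycle: the connecting path need not be $M$-alternating, its interior may revisit $V(P)\cup V(Q)$ or pass through $a_2,b_2$, and ``straightening'' a closed walk by taking symmetric differences with $M-\{e_1,e_2\}$ can simply delete $e_1$ from the configuration, leaving you with alternating cycles through neither edge. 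Since the whole content of the statement is that mutual dependence forces this obstruction to be a $2$-edge-cut, a proof must use the matching covered hypothesis quantitatively, not just connectivity. (A secondary slip: since the paper allows multiple edges, ``$V(G)=\{a,b\}$ contradicts the existence of two distinct edges'' is not quite right; in that case $e_1,e_2$ would be parallel and a matching containing $e_1$ omits $e_2$, which is how the degenerate case should be excluded.)

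For comparison, the paper does not prove this lemma at all; it quotes it as an immediate consequence of Lemma 4.1 of de Carvalho, Lucchesi and Murty. If you want a self-contained argument, a workable route is the one suggested by Theorem \ref{dm} (Dulmage--Mendelsohn) or Hall surplus: pass to $G':=G-\{a_2,b_2\}$, note that $M-e_2$ is a perfect matching of $G'$ containing $e_1$, and that mutual dependence forces $e_1$ to lie in \emph{every} perfect matching of $G'$ (otherwise adding back $e_2$ gives a matching of $G$ with $e_2$ but not $e_1$). Applying K\"onig/Hall to $G'-e_1$ then produces a deficient set $S\ni a_1$ with $|N_{G'-e_1}(S)|=|S|-1$, and comparing with the surplus inequality $|N_G(S)|\ge |S|+1$ valid in a matching covered bipartite graph pins down the cut $\{e_1,e_2\}$; your final balance argument (the matching $N$ avoids the cut, so it restricts to perfect matchings of both sides) then goes through verbatim. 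As it stands, however, the proposal is incomplete at its central step.
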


The following Dulmage-Mendelsohn decomposition of a bipartite graph with a perfect matching will be used later.

\begin{thm}{\em \cite{MD1958}}\label{dm}
Let $G(A,B)$ be a bipartite graph with a perfect matching.
An edge $e$ of $G$ does not lie in any perfect matching of $G$
 if, and only if, there exists a
partition $(A_1, A_2)$ of $A$ and a partition $(B_1, B_2$) of $B$ such that $|A_1|=|B_1|$,
$e\in E_G(A_2, B_1 )$ and $E_G(A_1, B_2 )=\emptyset$.
\end{thm}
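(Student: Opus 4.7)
My plan is to prove the two directions separately; the easy direction is a short counting argument, while the nontrivial direction constructs the required partition by reachability in an auxiliary digraph.

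For the sufficiency $(\Leftarrow)$: assume such a partition $(A_1,A_2)$ of $A$ and $(B_1,B_2)$ of $B$ exists. Since $E_G(A_1,B_2)=\emptyset$, every edge incident to a vertex of $A_1$ has its other end in $B_1$, so any perfect matching saturates $A_1$ using only edges into $B_1$. Combined with $|A_1|=|B_1|$, this forces every perfect matching to match $A_1$ bijectively onto $B_1$. Consequently no vertex of $B_1$ can be matched into $A_2$, so no edge of $E_G(A_2,B_1)$ lies in any perfect matching; in particular $e$ does not.

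For the necessity $(\Rightarrow)$: fix any perfect matching $M$ of $G$ and form the digraph $D=D(G,M)$ by orienting every edge of $M$ from $B$ to $A$ and every edge of $E(G)\setminus M$ from $A$ to $B$. The standard alternating-cycle characterization says that an edge $e=ab$ with $a\in A$, $b\in B$ lies in some perfect matching of $G$ if and only if $D-e$ contains a directed $b$-to-$a$ path. Since by hypothesis $e$ lies in no perfect matching, $a$ is not reachable from $b$ in $D$. Let $S$ denote the set of vertices reachable from $b$ in $D$, and set $A_1:=A\cap S$, $B_1:=B\cap S$, $A_2:=A\setminus A_1$, $B_2:=B\setminus B_1$. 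Then $b\in B_1$ and $a\in A_2$, so $e\in E_G(A_2,B_1)$.

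The two remaining conditions are verified by tracking orientations in $D$. For each $w\in B_1$, the unique out-edge of $w$ in $D$ is the $M$-edge $w\to M(w)$, so $M(w)\in A_1$; conversely, each $v\in A_1$ is entered in $D$ by an $M$-edge from $M(v)$, forcing $M(v)\in B_1$. Hence $M$ restricts to a bijection $A_1\leftrightarrow B_1$, giving $|A_1|=|B_1|$. Finally, for any $v\in A_1$, every non-$M$-edge at $v$ is oriented $v\to w$ in $D$ and so lands in $S\cap B=B_1$, while the $M$-edge at $v$ ends at $M(v)\in B_1$; thus every neighbor of $v$ lies in $B_1$, i.e., $E_G(A_1,B_2)=\emptyset$. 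The only real pitfall I foresee is lining up the labels so that the ``reachable'' side really is $A_1,B_1$ and $e$ sits in $E_G(A_2,B_1)$ as the statement requires; once the orientation convention for $D$ is pinned down, the rest is routine bookkeeping.
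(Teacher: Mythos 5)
Your proof is correct. Note that the paper does not prove this statement at all: it is quoted as the classical Dulmage--Mendelsohn result with a citation, so there is no in-paper argument to compare against; you have supplied the standard proof, and it is sound. The sufficiency direction (every perfect matching must match $A_1$ bijectively onto $B_1$ because $E_G(A_1,B_2)=\emptyset$ and $|A_1|=|B_1|$, so no edge of $E_G(A_2,B_1)$ can be used) is complete. The necessity direction via the digraph $D$ (orient $M$-edges $B\to A$, the rest $A\to B$, let $A_1,B_1$ be the vertices reachable from $b$) also checks out: since $e$ lies in no perfect matching while $M$ is one, $e\notin M$, and a directed $b$-to-$a$ path would close with $e$ into an $M$-alternating cycle, producing a perfect matching through $e$; the unique out-edge of each $w\in B_1$ and the unique in-edge of each $v\in A_1$ are $M$-edges, so $M$ pairs $A_1$ with $B_1$ and $|A_1|=|B_1|$; and every neighbour of a vertex of $A_1$ is again reachable, giving $E_G(A_1,B_2)=\emptyset$. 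Two cosmetic remarks: the ``iff'' form of the alternating-path characterization you invoke is only accurate for $e\notin M$ (an edge of $M$ is trivially in a perfect matching, yet $D-e$ need not contain a $b$-to-$a$ path), but your hypothesis forces $e\notin M$, so nothing is lost; and if one insists that the parts of the partitions be nonempty, this is automatic here since $b\in B_1$, $M(b)\in A_1$, $a\in A_2$, and $B_1=B$ would force $A_1=A$, contradicting $a\in A_2$.
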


An equivalence class with two edges in a brick is not always a removable doubleton, that is
a brick contained  two distinct equivalent edges could be non-near-bipartite.
For example,  it can be checked that $\{ e_1,e_2\}$ is the only    equivalence class with two edges in  the graph in Figure \ref{bn1} (b); after removing $e_1$ and $e_2$, no perfect matching in the remaining graph would contain  any red edge. So this graph is not near-bipartite.
But for cubic bricks, this result is true as shown in the following proposition.

\begin{figure}[htbp]
 \centering
 \begin{minipage}{6cm}
\includegraphics[width=4.5cm]{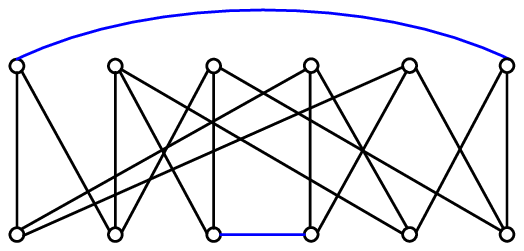}\\\centering(a)
\end{minipage}
\begin{minipage}{4cm}
\includegraphics[width=2.7cm]{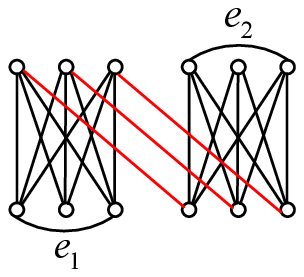}\\\centering(b)
\end{minipage}

  \caption{(a)~ The Cubeplex; (b)~a non-near-bipartite brick with two distinct equivalent edges.}\label{bn1}
\end{figure}

\begin{pro}\label{pro:3con}
Let $G$ be a cubic brick. If there exist two edges $e $ and $f$ such
that $G-\{e,f\}$ is bipartite, then $G$ is near-bipartite.
\end{pro}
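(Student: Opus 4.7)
The plan is to prove that $H:=G-\{e,f\}$ is matching covered; since a brick is non-bipartite and matching covered, this immediately gives $\{e_1,e_2\}=\{e,f\}$ as the required near-bipartite pair. First I would pin down the shape of $\{e,f\}$. Because $G$ is cubic, $|V(G)|=2n$ is even. Let $(A,B)$ be the bipartition of $H$. Comparing degree sums on each side of $(A,B)$ in $H$ (noting that removing $e$ and $f$ drops $G$-degree by $1$ only at their endpoints) and using that $G$ is non-bipartite, an elementary integrality argument rules out every placement of $\{e,f\}$ except: one of the two is an $A$-$A$ edge, the other is a $B$-$B$ edge, and $|A|=|B|=n$. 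Write $e=\alpha_1\alpha_2$ and $f=\beta_1\beta_2$.

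Now suppose for contradiction that $H$ is not matching covered. By Theorem \ref{dm} there exist partitions $A=A_1\cup A_2$ and $B=B_1\cup B_2$ with $|A_1|=|B_1|$, together with an edge $g\in E_H(A_2,B_1)$ that lies in no perfect matching of $H$, satisfying $E_H(A_1,B_2)=\emptyset$. The presence of $g$ forces $A_1,A_2,B_1,B_2$ all to be non-empty. Set $a_1:=|A_1\cap\{\alpha_1,\alpha_2\}|$ and $b_1:=|B_1\cap\{\beta_1,\beta_2\}|$. Since $E_H(A_1,B_2)=\emptyset$, every $H$-edge incident to $A_1$ lands in $B_1$, so a degree sum yields $|E_H(A_1,B_1)|=3|A_1|-a_1$; summing degrees at $B_1$ then gives $|E_H(A_2,B_1)|=a_1-b_1$. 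Since $g\in E_H(A_2,B_1)$, we must have $a_1>b_1$, limiting the possibilities to $(a_1,b_1)\in\{(1,0),(2,0),(2,1)\}$.

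The payoff comes from examining the edge cut
\[
\partial_G(A_2\cup B_2)=E_H(A_2,B_1)\,\cup\,\bigl(\{e\}\text{ if }a_1=1\bigr)\,\cup\,\bigl(\{f\}\text{ if }b_1=1\bigr);
\]
here $E_H(A_1,B_2)=\emptyset$ is used, and $e,f$ contribute iff their endpoints split across $(A_1,A_2)$ or $(B_1,B_2)$. Direct substitution gives $|\partial_G(A_2\cup B_2)|=(a_1-b_1)+[a_1=1]+[b_1=1]=2$ in each of the three surviving cases. But $|A_2\cup B_2|\geq 2$ (since both $|A_2|$ and $|B_2|$ are at least $1$) and $|A_1\cup B_1|\geq 2$, so this contradicts the $3$-edge-connectivity of the cubic brick $G$. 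Therefore $H$ is matching covered and $G$ is near-bipartite.

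The main difficulty is identifying the correct cut: among the cuts naturally produced by the Dulmage--Mendelsohn partition, it is $\partial_G(A_2\cup B_2)$, rather than $\partial_G(A_1\cup B_1)$ or $\partial_G(A_1\cup B_2)$, whose size uniformly collapses to $2$ in all three surviving $(a_1,b_1)$-cases; once the right cut is chosen, the rest is routine degree-count bookkeeping.
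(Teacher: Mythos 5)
Your proof is correct and follows essentially the same route as the paper: apply the Dulmage--Mendelsohn decomposition (Theorem \ref{dm}) to $G-\{e,f\}$, use cubic degree counts to evaluate the resulting cut, and exhibit a $2$-edge-cut of $G$ contradicting the $3$-connectivity of the brick. Your $(a_1,b_1)$ case bookkeeping is just a more explicit version of the paper's ``either\,/\,or'' count, so there is nothing substantively different to report.
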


\begin{proof}
Since $G$ is a brick, $G$ is not bipartite and is matching covered.
  By the definition of a near-bipartite graph, we need to show that $G-\{e,f\}$ is matching covered to complete the proof.   Suppose the two color classes of $G-\{e,f\}$ are $A$
  and $B$.
  Obviously, two ends of $e$ lie in the same color class of $G-\{e,f\}$, say $A$.  So two ends of $f$ belong to $B$.
  Suppose to the contrary that $G-\{e,f\}$ is not matching covered. By Theorem \ref{dm},
$A$
  and $B$ can be partitioned into $A_i$ and $B_i$ $(i=1,2)$, respectively,   such that $|A_1|=|B_1|$,
$|E_{G-\{e,f\}}(A_1,B_{2})|\geq 1$  and $|E_{G-\{e,f\}}(A_2,B_{1})|=0$.  Recalling that $G$ is cubic, by    calculating the sum of degrees of the vertices in $A_i$ and $B_i$, we have either $E_G(A_1,A_2)=\{e\}$, $|E_G(A_1,B_2)|=1$, $E_G(B_1,A_2\cup B_2)=\emptyset$ or $E_G(B_1,B_2)=\{f\}$, $|E_G(A_1,B_2)|=1$, $E_G(A_2,A_1\cup B_1)=\emptyset$.
we have
$|E_G(A_1\cup B_1,A_2\cup B_{2})|=2$ in both cases. Therefore $G$ is not 3-connected, contradicting   the fact that $G$ is a brick.
\end{proof}

The next two propositions concern the equivalence classes in a near-bipartite cubic brick.
The first one was already known to de Carvalho, Kothari, Lucchesi and Murty in 2014.

\begin{pro}\label{pro:4econ}
Suppose $\mathscr{E}$ is a set of removable doubletons of an  essentially 4-edge-connected cubic brick $G$ and $|\mathscr{E}|\geq 2$.
 Then $G$ can be decomposed into balanced bipartite vertex-induced subgraphs $G_i$ $(i=1,2,\ldots,|\mathscr{E}|)$ satisfying
  $E_G(G_j,G_{k})$   is a  removable doubleton of $G$ that belongs to $\mathscr{E}$ if $|j-k|\equiv 1\pmod {|\mathscr{E}|}$; otherwise, $E_G(G_j,G_{k})=\emptyset$.
Furthermore,  if $\mathscr{E}$ is the set of  all the removable doubletons in  $G$,   then
 $|V(G_i)|\neq 4$ for every $i\in \{1,2,\ldots,|\mathscr{E}|\}$. \\
\end{pro}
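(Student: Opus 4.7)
Write $\mathscr{E}=\{D_1,\dots,D_k\}$ with $D_i=\{e_i,f_i\}$ and $k:=|\mathscr{E}|\ge 2$. The plan is to first see how any two doubletons interact inside the bipartite matching covered graph $G-D_i$, then use the symmetry in the choice of $i$ to read off a cyclic structure on the pieces, and finally handle the ``furthermore'' clause by a local equivalence-class argument inside the offending $4$-vertex piece.

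\emph{Step 1 (each other doubleton becomes a $2$-edge cut).} Fix $i$. Since $D_i$ is a removable doubleton, $G-D_i$ is bipartite matching covered. For any $j\ne i$, every perfect matching of $G-D_i$ is a perfect matching of $G$ avoiding $D_i$, so the equivalence $e_j\sim f_j$ inherited from $G$ still holds in $G-D_i$; Lemma~\ref{thm:bi-eq} promotes $D_j$ to a $2$-edge cut $\partial_{G-D_i}(X_j)$ whose two sides are balanced bipartite. Since $|\partial_G(X_j)|\ge 4$ by essential $4$-edge-connectivity and $|D_j|=2$, both edges of $D_i$ must cross $X_j$, so $\partial_G(X_j)=D_i\cup D_j$ is a $4$-cut of $G$.

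\emph{Step 2 (laminarity and the cycle).} I would next argue that the $k-1$ cuts $\partial(X_j)$, $j\ne i$, are pairwise non-crossing inside $G-D_i$. The standard uncrossing gives $|\partial(X_j\cap X_\ell)|+|\partial(X_j\cup X_\ell)|=4$ there, and if all four corner regions are non-empty both of these additional cuts have size exactly $2$; Lemma~\ref{thm:bi-eq} applied to them then merges $D_j\cup D_\ell$ into a single four-element equivalence class in $G-D_i$. Lifting this merged class back to $G$ while remembering that $D_j$ and $D_\ell$ remain distinct equivalence classes of $G$ forces $D_i$, $D_j$, $D_\ell$ to be mutually exclusive in $G$, which by Theorem~\ref{thm:3-re} would restrict $G$ to the (essentially $4$-edge-connected cubic) exceptional list and contradict the hypotheses. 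Granted laminarity, the $k-1$ cuts partition $V(G)$ into $k$ balanced bipartite pieces arranged on a tree $T_i$; these pieces coincide with the components of $G-\mathscr{E}$ and are independent of $i$. In the quotient multigraph $\mathcal{G}$ on the pieces, with one double-edge per doubleton, Step~2 applied with each choice of $i$ says that $\mathcal{G}$ with any single doubleton-edge removed is a tree. A connected graph on $k$ vertices and $k$ edges that becomes a tree upon removing any edge must be a $k$-cycle, which is exactly the cyclic adjacency claimed in the statement.

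\emph{Step 3 (no piece of size $4$).} Assume $\mathscr{E}$ is the full list of removable doubletons of $G$ and some $G_i$ has $|V(G_i)|=4$, with colour classes $\{u_1,u_2\}$ and $\{v_1,v_2\}$. The two doubletons of $\mathscr{E}$ incident to $G_i$ contribute exactly four endpoints to $V(G_i)$, and the bipartite $2$-cut structure from Step~1 forces each such doubleton to place one endpoint in each colour class. A short case check on how these four endpoints sit on the four vertices of $G_i$ (each vertex receiving one, or a $(2,1,1,0)$ or $(2,2,0,0)$ distribution) combined with the cubic degree constraint admits only a handful of admissible induced structures; in each case tracing the local perfect-matching patterns exhibits a two-edge equivalence class lying entirely inside $G_i$ (for instance $\{u_1v_2,u_2v_1\}$ when $G_i\cong K_{2,2}$). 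By Theorem~\ref{thm:lo}, Proposition~\ref{pro:3con}, and the simple fact that neither edge of a two-edge equivalence class is individually removable, such a pair is a removable doubleton of $G$ and therefore must lie in $\mathscr{E}$; but every $\mathscr{E}$-edge goes between distinct pieces, a contradiction.

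The main obstacle I expect is Step~2: uncrossing only yields additional $2$-edge cuts rather than an immediate contradiction, so ruling out the crossing configuration requires lifting the merged equivalence class from $G-D_i$ back to $G$ and invoking Theorem~\ref{thm:3-re} together with essential $4$-edge-connectivity to eliminate the surviving candidates. Steps~1 and~3 and the cycle accounting at the end of Step~2 are essentially routine once laminarity is in hand.
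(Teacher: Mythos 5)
Your Steps 1 and 3 are essentially sound and close in spirit to the paper's own argument, but Step 2 --- which you yourself flag as the main obstacle --- has a genuine gap, and it is precisely where the paper does its real work. Two problems. First, Lemma \ref{thm:bi-eq} only goes from equivalence to a $2$-edge cut with balanced sides; you apply it in the converse direction to conclude that the corner $2$-cuts merge $D_j\cup D_\ell$ into one equivalence class of $G-D_i$. That converse is a parity statement, valid only when the corner sets have even cardinality; only the sides of the original cuts are known to be balanced, so all four corners may be odd, in which case the corner $2$-cuts are tight cuts of the bipartite graph $G-D_i$, every perfect matching of $G-D_i$ uses exactly one of the two doubletons, and no merged class appears --- your route then says nothing. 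Second, even in the even case the lift back to $G$ is wrong: if $D_j\cup D_\ell$ were a single equivalence class of the matching covered graph $G-D_i$, then some perfect matching of $G-D_i$ (hence of $G$) would contain all four edges, so $D_j$ and $D_\ell$ are emphatically \emph{not} mutually exclusive; moreover, since $D_j$ and $D_\ell$ are distinct classes of $G$, any perfect matching of $G$ separating them must contain $D_i$, so $D_i$ also fails to be mutually exclusive with one of them. Hence Theorem \ref{thm:3-re} cannot be invoked, and the crossing configuration is not eliminated by your argument.

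What the paper does at this point is a counting argument that your proposal never makes: granting your uncrossing computation that all four corner cuts have exactly two edges in $G'=G-D_i$, one gets $\sum_{t=1}^{4}|E_G(U_t,\overline{U_t})|\le 8+2\cdot 2=12$, since the only additional edges are the two edges of $D_i$ and each is counted at most twice; on the other hand cubicity, the parity of cuts in cubic graphs, and essential $4$-edge-connectivity force this sum to be at least $16$ (each even corner has a cut of size at least $4$; in the odd case at most two corners can be singletons --- three singletons would produce a nontrivial $3$-cut --- so the sum is at least $3+3+5+5$). This numerical contradiction, not Theorem \ref{thm:3-re}, is what rules out crossing, and it is where essential $4$-edge-connectivity genuinely enters. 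Once laminarity is granted, your cycle bookkeeping and your treatment of a $4$-vertex piece (the diagonal pair yields a removable doubleton outside $\mathscr{E}$ via Proposition \ref{pro:3con}) match the paper's proof.
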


\begin{proof}   Suppose $\mathscr{E}:=\{\{e_i, e_{i}'\}:  i=1,2,\ldots,s\}$, where $s=|\mathscr{E}|$. Let $G':=G- \{e_1,e_1'\}$.
 By Theorem \ref{thm:lo}, $G'$ is bipartite.   By Proposition \ref{pro:3con},  $G$ is near-bipartite.  So
 $G'$ is matching covered.
  Note that every perfect matching of $G'$ is also a perfect matching of $G$ and $\{e_i,e_i'\}$ is a  removable doubleton  of $G$.
  Then, for  $i=2,3,\ldots,s$, $ e_i $ and $ e_i' $ are   equivalent in $G'$. By Lemma \ref{thm:bi-eq}, for  $i=2,3,\ldots,s$, $\{e_i,e_i'\}$ is an edge-cut separating $G'$ into two balanced components, denoted by $H_i$ and $H_i'$.

  {\bf Claim.} For any $j,k \in  \{2,3,\ldots,s\}(j\neq k)$, $V(H_j)\subset V(H_k)$ or $V(H_j)\subset V(H'_k)$.

  {\em Proof of Claim.} Suppose to the contrary that each of the four sets  $ V(H_j)\cap V(H_k)$, $ V(H'_j)\cap V(H_k)$, $ V(H'_j)\cap V(H'_k)$ and $ V(H_j)\cap V(H'_k)$ is not empty.
  Let $U_1:= V(H_j)\cap V(H_k)$, $U_2:=  V(H'_j)\cap V(H_k)$, $U_3:=  V(H'_j)\cap V(H'_k)$ and $ U_4:= V(H_j)\cap V(H'_k)$.

   Noticing both $|V(H_j)|$ and $|V(H_k)|$ are even, the parities of the cardinalities of  those  four sets  $ U_1, U_2,U_3$ and $U_4$ are the same.
   Recalling that $G'$ is matching covered, it is 2-connected. Therefore,  $ |E_{G'}(U_i,\overline{U_i})| \geq 2$ for $i=1,2,3,4$.
  Noting that  $|E_{G'}(V(H_j),V(\overline{H}_j))|=2=|E_{G'}(V(H_k),V(\overline{H}_k))|$, we have  $ |E_{G'}(U_i,\overline{U_i})|=2$ for $i=1,2,3,4$.  Note that  an edge in $G$    contributes at most 2 in $ \sum_{i=1}^4|E_G(U_i,\overline{U_i})| $. Therefore, $ \sum_{i=1}^4|E_{G}(U_i,\overline{U_i})|\leq \sum_{i=1}^4|E_{G'}(U_i,\overline{U_i})|+2\times 2= 12$.

  On the other hand,  we have $\sum_{i=1}^4|E_{G}(U_i,\overline{U_i})|\geq 16$.
  If $ |U_1|$ is even, then $|E_G(U_i,\overline{U_i})|\geq 4$ since $G$ is essentially 4-edge-connected for $i=1,2,3,4$, therefore,  $\sum_{i=1}^4|E_{G}(U_i,\overline{U_i})|\geq 16$.
  Now we consider when
 $ |V(H_j)\cap V(H_k)|$ is odd. If three vertex sets in $U_1,U_2,U_3$ and $U_4$ are singleton, without loss of generality, suppose $|U_1|=|U_2|=|U_3|=1$. Then $|E_{G}(U_4,\overline{U_4})|=3$ since $G$ is cubic.  A contradiction with the fact that $G$ is essentially 4-edge-connected. So  at most two  vertex sets in $U_1,U_2,U_3$ and $U_4$ are singleton. Again, by $G$ is cubic, $\sum_{i=1}^4|E_{G}(U_i,\overline{U_i})|\geq 3+3+5+5=16.$
 This contradiction shows the Claim holds.
   $\hfill\square$

Now, we can choose $i_1\in \{2,3,\ldots, s \}$ such that $V(H_{i_1})\subset V(H_{i})$ for every $i\in \{2,3,\ldots, s \}\setminus \{i_1\}$; for $j=2,3,\ldots, s-1$, let $i_j\in \{2,3,\ldots, s \}\setminus \{i_1,i_2,\ldots, i_{j-1}\}$ such that $V(H_{i_j})\subset V(H_{i}) $ for every $ i\in \{2,3,\ldots, s \}\setminus \{i_1,i_2,\ldots, i_{j}\}$ (exchange the notation of $H_i$ with $H_i'$ if needed). Let  $G_1=H_{i_1}$, $G_j=H_{i_{j}}-V(H_{i_{j-1}})$ for $j=2,3,\ldots, s-1$ and $G_s=H'_{i_{s-1}}$. Again, since $G$ is essentially 4-edge-connected, $E_G(G_1, G_s)=\{ e_1,e_1' \}$. Therefore $G_i(i=1,2,\dots, s)$ is the subgraph we need.

If $\mathscr{E}$ is the set of  all the  removable doubletons in  $G$,  suppose to the contrary that there exists some $i$ such that  $|V(G_i)|= 4$ (assume that $ A_i$ and $ B_i$ are the color classes of $G_i$). Then $G_i$ is a 4-cycle, denoted by $a_1b_1a_2b_2a_1$, where $a_1,a_2\in A_i$ and $b_1,b_2\in B_i$. Therefore $G-\{a_1b_2,a_2b_1\}$ is a bipartite graph with color classes $\cup_{j=1}^{i-1}A_j\cup \{a_1,b_2\}\cup(\cup_{j=i+1}^{s}B_j )$ and $\cup_{j=1}^{i-1}B_j\cup \{b_1,a_2\}\cup( \cup_{j=i+1}^{s}A_j) $. By Proposition \ref {pro:3con}, $\{a_1b_2, a_2b_1\}$ constitutes  a removable doubleton of $G$ which is not in $\mathscr{E}$. A contradiction. So the result follows.
\end{proof}

\begin{pro}\label{lem:ct}
Suppose  $\{e_1,e_1'\}$ and $\{e_2,e_2'\}$ are removable doubletons of a cubic brick $G$.
If both $e_1$ and $e_2$ are incident with $v_0$, then $e_1'$ and $e_2'$ are adjacent, and $v_0u_0\in E(G)$, where   $u_0$ is the common vertex of $e_1'$ and $e_2'$.
\end{pro}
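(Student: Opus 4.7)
The plan is to set up parallel bipartitions from the two doubletons, locate $v_0$ and its three neighbors in their common refinement, and then exhibit a small tight cut that must be trivial because $G$ is a brick. First, $\{e_1,e_1'\}$ is an equivalence class of $G$ (by Theorem \ref{thm:lo}, $G-\{e_1,e_1'\}$ is bipartite; by the definition of a removable doubleton it is also matching covered), so let $(A,B)$ be its (balanced) bipartition. Taking any perfect matching $M$ containing $e_1$ (hence $e_1'$, by mutual dependence), the remaining edges of $M$ lie in $G-\{e_1,e_1'\}$ and are bichromatic, so the four endpoints of $\{e_1,e_1'\}$ split evenly between $A$ and $B$. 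The case where both $e_1$ and $e_1'$ are bichromatic would force $G$ to be bipartite, so both are monochromatic on opposite sides; without loss of generality both ends of $e_1$ lie in $A$ and both ends of $e_1'$ lie in $B$. The identical argument for $\{e_2,e_2'\}$ yields a bipartition $(A',B')$ of $G-\{e_2,e_2'\}$ with both ends of $e_2$ in $A'$ and both ends of $e_2'$ in $B'$.

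Next I take the common refinement $U=A\cap A'$, $W=A\cap B'$, $X=B\cap A'$, $Y=B\cap B'$. Let $e_3$ denote the third edge incident with $v_0$, and write $v_1,v_2,v_3$ for the other ends of $e_1,e_2,e_3$; note that $e_1'$ avoids $v_0$ (both its ends lie in $B$ while $v_0\in A$) and similarly $e_2'$ avoids $v_0$. Since $e_2,e_3$ are bichromatic in $(A,B)$ and $e_1,e_3$ are bichromatic in $(A',B')$, I obtain $v_0\in U$, $v_1\in W$, $v_2\in X$ and $v_3\in Y$. Any edge of $G-\{e_1,e_1',e_2,e_2'\}$ is bichromatic in both bipartitions and therefore lies in $E_G(U,Y)\cup E_G(W,X)$; meanwhile $e_1'$, monochromatic in $(A,B)$ but bichromatic in $(A',B')$, joins $X$ to $Y$, and symmetrically $e_2'$ joins $W$ to $Y$.

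The core of the argument is a tight-cut observation. Let $H$ be the component of $G-\{e_1,e_1',e_2,e_2'\}$ containing $v_0$, so $V(H)\subseteq U\cup Y$, and set $S=V(H)\setminus\{v_0\}$. The edge inventory above shows $\partial(S)=\{e_3,e_1',e_2'\}$. I then check that $\partial(S)$ is tight: every perfect matching $M$ of $G$ saturates $v_0$ via exactly one of $e_1,e_2,e_3$, and mutual dependence dictates the rest. If $e_3\in M$ then $e_1,e_2\notin M$ and hence $e_1',e_2'\notin M$; if $e_1\in M$ then $e_1'\in M$ while $e_2,e_2'\notin M$; if $e_2\in M$ then $e_2'\in M$ while $e_1,e_1'\notin M$. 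In each case $|M\cap\partial(S)|=1$. Since $G$ is a brick it has no non-trivial tight cut, and $\overline{S}\supseteq\{v_0,v_1,v_2\}$ contains at least three (pairwise distinct) vertices, so $\partial(S)$ is trivial with $|S|=1$; as $v_3\in Y\subseteq S$, this forces $Y=\{v_3\}$ (and $U=\{v_0\}$).

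Consequently the $Y$-end of $e_1'$ and the $Y$-end of $e_2'$ must both equal $v_3$, so $e_1'$ and $e_2'$ are adjacent at $u_0:=v_3$, and $v_0u_0=e_3\in E(G)$, as required. The delicate preparatory point is the monochromatic/opposite-sides claim for each doubleton, which pins down the four-part partition; the main obstacle is correctly identifying the cut $\{e_3,e_1',e_2'\}$ and verifying its tightness, since that is where the brick hypothesis does the real work.
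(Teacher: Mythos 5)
Your argument is essentially the paper's own: both proofs exhibit the cut around the bipartite piece containing $v_0$, with $v_0$ deleted, check that it is a tight cut, and then use the fact that a brick has no nontrivial tight cut to shrink that piece to the single vertex $u_0=v_3$. The only organisational difference is that the paper obtains the two balanced bipartite pieces from the decomposition of Proposition \ref{pro:4econ}, while you build the same four-part structure $U,W,X,Y$ by hand from the two bipartitions supplied by Theorem \ref{thm:lo}; a side benefit of your construction is that it does not import the essential 4-edge-connectivity hypothesis that Proposition \ref{pro:4econ} carries, so it matches the hypotheses of the proposition as stated.

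One step needs a line of justification that you did not supply. The ``edge inventory'' gives only the inclusion $\partial(S)\subseteq\{e_3,e_1',e_2'\}$: a priori the $Y$-ends of $e_1'$ and $e_2'$ could lie in a component of $G-\{e_1,e_1',e_2,e_2'\}$ different from $H$, in which case those edges would not cross $\partial(S)$ and your tightness check (a perfect matching through $e_1$ would then meet $\partial(S)$ in zero edges) would collapse. The fix is immediate: $S\ni v_3$ and $\overline{S}\ni v_0$ are both nonempty, and a cubic brick is 3-connected, hence 3-edge-connected, so $|\partial(S)|\geq 3$, which forces $\partial(S)=\{e_3,e_1',e_2'\}$ and in particular places both $Y$-ends of $e_1',e_2'$ inside $V(H)$. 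Separately, the parenthetical claim $Y\subseteq S$ is not justified and is not needed: all you use is $v_3\in S$ (true, since $e_3\in E(H)$), after which $S=\{v_3\}$ already forces both $Y$-ends of $e_1'$ and $e_2'$ to equal $v_3$, giving $u_0=v_3$ and $v_0u_0=e_3\in E(G)$.
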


\begin{proof} Suppose $e_1'=u_0u_1, e_2'=u'_0u_2$ and $N(v_0)=\{v_1,v_2, v_3\}$, where $e_1=v_0v_1, e_2=v_0v_2$. We will show that $u_0=u_0'=v_3$ to complete the proof.
By Proposition \ref{pro:4econ}, $G$ can be decomposed into    balanced bipartite vertex-induced subgraphs $G_i(A_i,B_i)~(i=1,2)$  such that $v_0\in A_1$, $u_0,u_0',v_3\in B_1$, $v_1,u_2\in A_2$, $v_2,u_1\in B_2$.

Note that $N(A_1-\{v_0\}) =B_1$. Then simple counting argument shows that $V(G_1)-\{v_0\}$ is a shore of a tight
cut of $G$. Since $G$ is free of nontrivial tight cuts, $V(G_1)-\{v_0\}$ must be trivial. We
deduce that $B_1 = \{u_0\}$ and $A_1 = \{v_0\}$. This proves Proposition \ref{lem:ct}.

\end{proof}

\section{The proof of the main theorem}
\label{sec:pf}
It is easy to check that for a cubic brick $G$, $G$ is isomorphic
to $K_4$ if $|V(G)|=4$, and $G$ is isomorphic to $\overline{C_6}$
if $|V(G)|=6$. Noticing $\overline{C_6}$ is not essentially 4-edge-connected, in the following, we always assume that $|V(G)|\ge8$.
We can classify the edges of $G$, by Theorem \ref{ec},
into three disjoint classes: (1) edges that participate in a removable
doubleton, (2) $b$-invariant edges, and (3) quasi-$b$-invariant edges. For
simplicity, we denote the three edge sets by $E_1$, $E_2$ and $E_3$,
respectively.  Therefore,  $|E_1|+|E_2|+|E_3|=\frac{3}{2}|V(G)|$.
We will show that $|E_2|\geq |V(G
)|/2$ to complete the proof.
Note that the
Cubeplex contains $14>6=|V (G)|/2$ $b$-invariant edges. So we suppose $G$ is
not the Cubeplex. Therefore, by Theorem \ref{cube}, every vertex
in $G$ is incident with at most one quasi-$b$-invariant edge, that is  $|E_3|\leq |V(G
)|/2$. We will
consider the following two cases depending on the number of removable
doubletons.

Case 1. $G$ has at most two removable doubletons.

This implies that $|E_1|\leq 4$. Recalling
that $|V(G)|\ge8$,   $|E(G)|\ge12$. So
$|E_1|\leq |V (G)|/2$. Recall that $|E_3|\leq |V(G
)|/2$. Hence,
$|E_2|\geq |V(G
)|/2$.

Now, we show every brick has more than $|V(G
)|/2$ $b$-invariant edges in
this case. Firstly, we claim that $|V(G)|=8$ and
$|E_1|=4$. Otherwise, $|V(G)|>8$, or $|E_1|=2<|V (G)|/2$.
Since $|E_1|\leq 4$, we have
$|E_1|<|V (G)|/2$ in either case. Since $|E_3|\le |V (G)|/2$, it follows that
$|E_2|> |V (G)|/2$. Therefore, $G$ contains
more than $|V (G)|/2$ $b$-invariant edges, a contradiction. Thus,
$|V(G)|=8$ and $|E_1|=4$.

\begin{figure}[htbp]
 \centering
  \includegraphics[width=4cm]{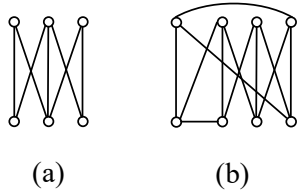}\\
  \caption{(a) the graph isomorphic to $G_2$; (b) $G'$.}\label{bl}
\end{figure}

As $|V(G)|=8$ and $|E_1|=4$,
by Proposition \ref{pro:4econ}, we may assume that $G-E_1$ contains two
bipartite vertex-induced subgraphs $G_1$ and $G_2$, and $|V(G_1)|=2$ and $|V(G_2)|=6$. Then, $G_1$ is
isomorphic to $K_2$, $G_2$ contains
four vertices with degree two and the remaining two vertices have degree
three. It is easy to check that $G_2$ is isomorphic the graph in
Figure \ref{bl} (a). Recall that $G$ is near-bipartite. Hence, $G$ is
isomorphic to the M\"obius ladder
with 8 vertices or the graph $G'$ shown in Figure \ref{bl} (b). However, the M\"obius ladder
with 8 vertices has four distinct removable doubletons, and $G'$ contains
a triangle which implies that it contains a nontrivial 3-cut. So $G'$ is not essentially 4-edge-connected,  giving a
contradiction.  Therefore, no graph has at most two
removable doubletons and exactly $|V(G)|/2$ $b$-invariant edges.\\

Case 2. $G$ has more than two removable doubletons.

We will show that each vertex in $G$ is incident with at
least one $b$-invariant edge. Recall that every vertex
in $G$ is incident with at most one quasi-$b$-invariant edge. So,
it is enough to show the following claim.

{\bf Claim 1.} If  every edge in $\{uu_1, uu_2\}$ participates in a
removable doubleton, then  $uv$ is
$b$-invariant in $G$, where $v\in N(u)-\{u_1,u_2\}$.

{\em Proof of Claim 1.} Firstly, we claim $uv$ is removable in $G$. Otherwise, $G$ contains three mutually
exclusive removable doubletons, so that either $G$ is $K_4$ or $G$  is $\overline{C}_6$,
up to multiple edges joining vertices of both triangles of $\overline{C}_6$ by Theorem \ref{thm:3-re}, contradicting  the hypothesis that $|V(G)|\geq 8$.

Note that $G$ has more than two removable doubletons, we may assume that
$\{e,e'\}$ is a removable doubleton of $G$ such that $\{e,e'\}\cap\{uu_1,uu_2\}=\emptyset $. Now, we will show that
$uv$ is also removable in $G-\{e,e'\}$. Assume to the contrary that there
exists an edge $f$ of $G$ that is not contained in any perfect matching of
$G-\{e,e',uv\}$. Note that $G-\{e,e'\}$ is matching covered  by Proposition \ref{pro:3con}. Then any perfect matching $M_1$ of $G-\{e,e'\}$ that
contains $f$   also contains
$uv$. By Proposition \ref{lem:ct}, we may assume that $\{uu_1,vv_1\}$ and
$\{uu_2,vv_2\}$ are two removable doubletons of $G$. By Proposition
\ref{pro:4econ}, we may assume $G$ can be decomposed
into balanced bipartite vertex-induced subgraphs $G_i(A_i,B_i)(i=1,2,3)$ satisfying: \\
(1) $E(A_1,B_2)=vv_1$,  $E(A_2,B_3)=uu_2$, $E(B_1,A_2)=uu_1$,
$E(B_2,A_3)=vv_2$, $E(A_1,A_3)=e$ and $E(B_1,B_3)=e'$;\\
(2) $G_2=uv$ (see Figure \ref{bn}).

\begin{figure}[htbp]
 \centering
  \includegraphics[width=5cm]{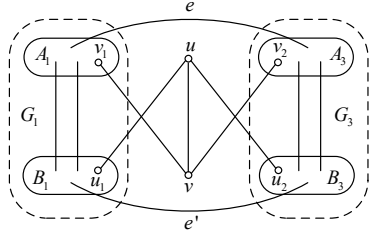}\\
  \caption{illustration in the proof of Claim 1.}\label{bn}
\end{figure}

Assume without loss of generality that $f\in E(G_1)$. Then
$M_1\cap E(G_1)$ is a perfect matching of $G_1$ that contains $f$. Let
$M_2$ be an arbitrary perfect matching of $G-\{e,e'\}$ that contains the
removable doubleton of $\{uu_2,vv_2\}$. Then $M_2\cap E(G-V(G_1))$ is a
perfect matching of $G-V(G_1)$. So, $(M_1\cap E(G_1))\cup (M_2\cap E(G-V(G_1))$ is a perfect matching
of $G-\{e,e'\}$ that contains $f$ but not $uv$, giving a contradiction.

Finally, since $uv$ is removable in both $G$ and $G-\{e,e'\}$, we conclude
that $G-uv$ is   near-bipartite   and $\{e,e'\}$ is a removable doubleton
of $G-uv$. So, $b(G-uv)=1$ by Proposition \ref{nbn}. Therefore, $uv$ is
$b$-invariant in $G$.
 $\hfill\square$

Now, we show that all the graphs that attain this lower bound are
a prism  of order $4k + 2$, and a M\"obius ladder  of order  $4k$,
for some $k \geq  2$.  Now we suppose
that $G$ is an essentially 4-edge-connected cubic near-bipartite
brick with exactly $|V (G)|/2$
$b$-invariant edges,  so that $|E_1|>4$. We have the following claim.

{\bf Claim 2.} Each component of $G-E_1$ is $K_2$.

{\em Proof of  Claim 2.}
Suppose  to the contrary that  there exists
a component of $G-E_1$, say $G_i$,  satisfying $|V(G_i)|> 2$. Then $|V(G_i)|\ge6$ by Proposition \ref{pro:4econ}. Now we  consider the edge
set $E(G_i)$. Note that $G_i$ contains at most $|V (G_i)|/2$
quasi-$b$-invariant edges of $G$ by Theorem \ref{cube}.  Since each edge of $E(G_i)$ is removable in $G$,
$G_i$ contains at least $|E(G_i)|-\frac{|V(G_i)|}{2}$ $b$-invariant edges of $G$.
By Proposition \ref{pro:4econ}, $G_i$ and $G-V(G_i)$  are connected by two pairs of removable doubletons of $G$. Since $|V(G_i)|> 2$, every edge in one of those removable doubletons is not adjacent to any edge in the other removable doubleton by Proposition \ref{lem:ct}.
Therefore, except four vertices, the degrees of  all the other vertices in $G_i$ are  three,
that is $|E(G_i)|=3|V (G_i)|/2-2$.
 This
implies that $G_i$ contains more than $|V (G_i)|/2$ $b$-invariant edges. For every
  component $G_j$   with two vertices,   both of those two vertices are incident with
two edges which lie in different removable doubletons, respectively. By Claim 1,  the unique edge of the component
is $b$-invariant. So $G_j$ contains exactly $|V (G_j)|/2(=1)$ $b$-invariant
edges of $G$. Therefore, we can conclude that $G$ contains more than $|V (G)|/2$
$b$-invariant edges if $G-E_1$ contains a component with more than one edge,
giving a contradiction.
 $\hfill\square$

 Each vertex of $G$ is incident with two edges in $E_1$ by Claim 2. Hence, $G$ is
isomorphic to a prism if $|G|=4k + 2$, and is isomorphic to a  M\"obius
ladder  if $|G|=4k$, for some $k\geq 2$. Therefore, Theorem \ref{thm:conj} holds.

\section{Acknowledgements}

The authors would like to express their gratitude to the referees,
for their careful reading and valuable
comments towards the improvement of the paper, and also for giving the present   proof of
Proposition \ref{lem:ct}.

\bibliographystyle{plain}
\bibliography{references}






\bibliographystyle{amsplain}

\end{document}